\newcommand{\R}{\mathds{R}}
\newcommand{\Ric}{\mathrm{Ric}}
\newcommand{\C}{\mathds{C}}            
\newcommand{\de}{\partial}          
\newcommand{\K}{K\"{a}hler}
\newcommand{\OO}{{\mathcal{O}}}
\newcommand{\ov}[1]{\overline{#1}}
\newcommand{\wi}[1]{\widetilde{#1}}
\newcommand{\ngh}{neighbourhood}
\newcommand{\W}{\Omega}
\newcommand{\va}{\varphi}
\newcommand{\vf}{\varphi}
\newcommand{\hs}{\hspace{0.1em}}
\newcommand{\FR}{{\mathfrak{R}}}
\newcommand{\FF}{{\mathfrak{F}}}
\newcommand{\CG}{{\mathcal{G}}}
\def\F{\wi{\mathcal F}}
\newtheorem{theor}{Theorem}[section]
\newtheorem{prop}[theor]{Proposition}
\begin{document}

\title[Holomorphic isometries into homogeneous bounded domains]{Holomorphic isometries into homogeneous bounded domains}

\author{Andrea Loi}
\address{(Andrea Loi) Dipartimento di Matematica \\
         Universit\`a di Cagliari (Italy)}
         \email{loi@unica.it}

\author{Roberto Mossa}
\address{(Roberto Mossa) 
Instituto de Matem\'atica e Estatistica  \\
         Universidade de S\~ao Paulo (Brasil)}
         \email{robertom@ime.usp.br}

\thanks{
The first author  supported  by Prin 2015 -- Real and Complex Manifolds; Geometry, Topology and Harmonic Analysis -- Italy, by INdAM. GNSAGA - Gruppo Nazionale per le Strutture Algebriche, Geometriche e le loro Applicazioni,  by STAGE - Funded by Fondazione di Sardegna and Regione Autonoma della Sardegna.\\
\hspace*{1.3ex}The second author was supported by
FAPESP (grant: 2018/08971-9)}

\subjclass[2000]{53C55, 32Q15, 32T15.} 
\keywords{\K\ \ metrics, \K-Einstein metrics,  \K--Ricci solitons; homogeneous bounded domains; homogeneous metric;  Calabi's diastasis function}

\begin{abstract}
We prove two rigidity  theorems  on holomorphic isometries into homogeneous bounded domains.
The first  shows that a \K-Ricci soliton induced by the homogeneous metric of a homogeneous bounded  domain is trivial, i.e. \K-Einstein. In the second one we prove that 
a homogeneous  bounded domain and the flat  (definite or indefinite) complex Euclidean space are not  relatives, i.e. they do not share a common \K\ submanifold (of positive dimension). Our  theorems extend  the results  proved in  \cite{LMpams} and \cite{Cheng2021}, respectively.
\end{abstract}
 
\maketitle

\tableofcontents  

\section{Introduction}
The complex $n$-dimensional   hyperbolic space, namely the complex unit ball $\C H^n\subset \C^n$ equipped with the hyperbolic metric $g_{hyp}$ (a multiple of the Bergman metric of  constant and negative  holomorphic sectional curvature) is  the  prototype of noncompact  and nonflat \K\ manifold.  Thus, many rigidity results of the \K\ geometry of the complex hyperbolic space have been the subject of study by various mathematicians.
 Of particular interest  is the  study of  its  \K\ submanifolds, or more generally, those \K\  manifolds $(M,  g)$ which admits a holomorphic isometry  into  $(\C H^n, g_{hyp})$. 
 The first result in this direction was obtained in the celebrated work of Calabi \cite{Cal} were it is proven (among other things) that 
 the complex  flat space does not  admit a holomorphic isometry into any complex hyperbolic space.
 This result has been later extended by Umehara as expressed by the following striking result.
 
 \vskip 0.3cm

\noindent
{\bf Theorem A (\cite{UmearaD})}
{\em If  $(M, g)$ admits a holomorphic isometry into the
complex hyperbolic space then it cannot admit a holomorphic isometry into 
the complex flat space.}

\vskip 0.3cm

The reader is referred to \cite{CDY} for the  extension of Theorem A to the case  of  indefinite complex hyperbolic and flat spaces.

Another  recent rigidity  result on holomorphic isometries into the complex hyperbolic space is the following theorem:

\vskip 0.3cm

\noindent
{\bf Theorem B (\cite{LMpams})}
{\em If  $(g, X)$ is a \K-Ricci soliton on a complex manifold $M$ 
and $(M, g)$ admits a holomorphic isometry into the 
complex hyperbolic space
then the soliton is trivial, i.e. $g$ is KE.}

\vskip 0.3cm

Recall that a \K-Ricci soliton (KRS)  $(g, X)$ (see e.g. \cite{LMpams} and  \cite{BG} for details and references)
 consists of   a \K\ metric $g$ and  a holomorphic vector field $X$, called  the {\em solitonic vector field} such that 
$\Ric_{g}=\lambda g+L_{X}g$
where $\Ric_{g}$ is the Ricci tensor of the metric $g$,  $\lambda$ a constant and $L_Xg$ denotes the Lie derivative of $g$ with respect to $X$.
Clearly KRS generalize KE metrics. Indeed any
KE metric $g$ on a complex manifold $M$ gives rise to a
trivial KRS by choosing $X = 0$ or $X$ Killing with
respect to $g$.

Notice that Umehara \cite{UmearaE}  proves that a holomorphic isometry of a KE  manifold into $(\C H^n, g_{hyp})$ is necessarily totally geodesic, and hence the conclusion of Theorem B  is that $(M, g)$ is  an open subset of a complex hyperbolic space.

Recently Cheng and Hao \cite[Theorem 3]{Cheng2021} extend Theorem A to the case of the  Bergman metric on a homogeneous bounded  domain: 

\vskip 0.3cm

\noindent
{\bf Theorem C (\cite[Theorem 3]{Cheng2021}})
{\em Let $(M, g)$ be a  \K\ manifold  which admits a holomorphic   isometry  into  a homogeneous bounded domain
$\left(\Omega, g_{B}\right)$ endowed with its Bergman metric $g_B$. 
Then $(M, g)$ cannot admit a holomorphic isometry  into  a  complex Euclidian space.}

\vskip 0.3cm

The main result of the paper is the following Theorem \ref{mainteor} where it is shown  that  we get the same conclusion as in  Theorem B  and Theorem C if we replace 
either the  hyperbolic metric or the Bergman metric  by any homogeneous metric  $g_\Omega$ on a  
 homogeneous bounded  domain $\Omega$. 
 Recall that $g_\Omega$ is a homogenous metric on a bounded domain  $\Omega\subset \C^n$ 
 if $\left(\Omega, g_\Omega\right)$ is acted upon transitively by its group biholomorphic isometries.
Notice that  a  bounded symmetric domain with its Bergman metric  is a very special example  of this instance.
Moreover, there exist many  homogeneous  \K\ metrics  on a  bounded  domain   different form the Bergman metric which  
is KE  by the homogeneity assumption (see Theorem \ref{thmhide} below, \cite{kodomain} and  \cite{SIL} for details).

\begin{theor}\label{mainteor} Let $(M, g)$ be a  \K\ manifold  which admits a holomorphic   isometry  into  a homogeneous bounded domain
$\left(\Omega, g_\Omega\right)$. 
Then the following facts hold true:
\begin{itemize}
\item [(i)]
any  KRS on $M$ is trivial, i.e. $g$ is KE;
\item [(ii)]
$(M, g)$ cannot admit a holomorphic isometry  into  a  complex (definite or indefinite) Euclidian space.
\end{itemize}
\end{theor}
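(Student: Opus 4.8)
The plan is to reduce both statements to a single analytic mechanism: the behaviour of the Calabi diastasis function under holomorphic isometric immersions, combined with the fact (due to the homogeneity of $\Omega$) that the homogeneous metric $g_\Omega$ is real analytic and its diastasis $D^{g_\Omega}$ is globally defined, bounded below, and in fact satisfies a suitable form of the hereditary property. Concretely, if $f\colon (M,g)\to(\Omega,g_\Omega)$ is a holomorphic isometry, then by Calabi's hereditary principle the diastasis $D^g$ of $g$ equals $f^*D^{g_\Omega}$, so $D^g$ inherits whatever growth/boundedness constraints $D^{g_\Omega}$ enjoys. The first step is therefore to record the needed analytic properties of $D^{g_\Omega}$ on a homogeneous bounded domain — most importantly that, after choosing a base point, $e^{-D^{g_\Omega}}$ extends to a nice (bounded) function, mirroring the situation for $\C H^n$ and for the Bergman metric of a homogeneous bounded domain. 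I expect this to follow from the structure theory of homogeneous bounded domains (realization as a Siegel domain, or the explicit description of $g_\Omega$) together with the results of the papers cited in the introduction (\cite{kodomain}, \cite{SIL}); the key point is that homogeneity forces $D^{g_\Omega}$ to be comparable to the Bergman diastasis up to controlled terms, hence bounded above is ruled out, bounded below forces rigidity.

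For part (i), I would follow the strategy of Theorem B from \cite{LMpams}. Suppose $(g,X)$ is a KRS on $M$ with $\Ric_g=\lambda g+L_Xg$. Using the holomorphic isometry into $(\Omega,g_\Omega)$ and the hereditary property of the diastasis, one transfers the soliton equation into a PDE for the diastasis potential $D^g=f^*D^{g_\Omega}$. The soliton identity then says that the Ricci potential of $g$ differs from a multiple of the \K\ potential by the real part of a holomorphic function (coming from the solitonic vector field $X$); combined with the explicit form of the Ricci form of the homogeneous metric (which, by homogeneity/KE-type behaviour, is again controlled by the diastasis) one gets an overdetermined equation. The conclusion that $X$ must be Killing — equivalently $L_Xg=0$, i.e. the soliton is trivial — should come from evaluating this equation along the geodesic rays or using the boundedness of $e^{-D^{g_\Omega}}$ to force the potential of $L_Xg$ to vanish. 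So part (i) reduces to reproducing the argument of \cite{LMpams} with $g_{hyp}$ replaced by $g_\Omega$, the only new ingredient being the diastasis estimate for homogeneous metrics.

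For part (ii), I would argue by contradiction following Calabi/Umehara and Cheng–Hao. Assume $(M,g)$ admits holomorphic isometries $f_1\colon M\to(\Omega,g_\Omega)$ and $f_2\colon M\to(\C^N,g_0)$ (or the indefinite analogue), with $\dim M\ge 1$. By the hereditary property applied to both, $D^g=f_1^*D^{g_\Omega}=f_2^*D^{g_0}$. Now $D^{g_0}(z,\bar z)=\sum|z_i|^2$ (respectively an indefinite Hermitian form in the pseudo-Euclidean case), so $e^{-D^g}$ is bounded above by $1$ on all of $M$; on the other hand $D^{g_\Omega}$ on a homogeneous bounded domain blows up to $+\infty$ as one approaches the boundary along any geodesic ray (this is the homogeneous-metric analogue of the estimate $e^{-D^{g_{hyp}}}\to 0$ used in Umehara's proof), so $f_1^*D^{g_\Omega}$ must be unbounded on $M$ unless $f_1$ is constant — contradicting $\dim M\ge 1$ and that $f_1$ is an isometric immersion. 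To make this precise one needs to localize: restrict to a small disc $\Delta\subset M$ through a point, note $D^g$ is real analytic there, use that $f_2^*D^{g_0}$ is a nonnegative real analytic function that is a sum of moduli squared of holomorphic functions, hence $D^g$ is \emph{globally} of "Euclidean type"; then the comparison with the diastasis of the homogeneous metric — which has a strictly plurisubharmonic exhaustion behaviour — yields the contradiction exactly as in \cite{Cheng2021}. The indefinite case is handled by the same diastasis computation, noting that for the pseudo-Euclidean metric $D$ is still a Hermitian polynomial of degree two, which is incompatible with the transcendental growth of $D^{g_\Omega}$.

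The main obstacle, in both parts, is the diastasis estimate for a general homogeneous metric $g_\Omega$: unlike the Bergman metric or $g_{hyp}$, an arbitrary homogeneous \K\ metric on $\Omega$ has no a priori closed form, so one must extract from homogeneity alone that $D^{g_\Omega}$ is globally defined, that $e^{-D^{g_\Omega}}$ is bounded above, and that $D^{g_\Omega}$ grows without bound toward $\partial\Omega$. I expect this to be the technical heart of the paper — presumably handled via the normal $J$-algebra / Siegel-domain realization of $\Omega$, where every homogeneous metric can be written down explicitly enough to compute or bound its diastasis, or alternatively by comparing $g_\Omega$ with the Bergman metric $g_B$ (they are mutually bounded on compact sets and the ratio of potentials is controlled) and invoking Theorem C together with the known behaviour of the Bergman diastasis. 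Once that estimate is in hand, both (i) and (ii) follow by the diastasis-hereditary arguments sketched above, essentially verbatim from \cite{LMpams} and \cite{Cheng2021}.
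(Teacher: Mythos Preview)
Your proposal correctly identifies the hereditary property of Calabi's diastasis as the bridge between $(M,g)$ and $(\Omega,g_\Omega)$, and you rightly anticipate that the technical heart is pinning down the structure of $D^{g_\Omega}$ for a general homogeneous metric. But the mechanism you propose --- boundedness and growth estimates for $e^{-D^{g_\Omega}}$ --- is not the one the paper uses, and in fact does not work.

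For part (ii), your claim that ``$f_1^*D^{g_\Omega}$ must be unbounded on $M$ unless $f_1$ is constant'' is false: a holomorphic isometric immersion of a small disk into $\Omega$ can have image contained in a compact subset of $\Omega$, so the pulled-back diastasis is bounded there. No growth comparison distinguishes this from the Euclidean diastasis on a small disk. What is actually needed is an \emph{algebraic} obstruction. The paper's input from homogeneity (Theorem~\ref{thmhide}, due to Hishi) is that the K\"ahler potential of $g_\Omega$ on a Siegel-domain realization has the explicit form $\sum_{k=1}^r \gamma_k \log F_k$ with each $F_k$ a \emph{rational} function; consequently $e^{D^{g_\Omega}}$, after pullback and restriction to a complex line, is a product of real powers of Nash algebraic functions composed with holomorphic germs. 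The contradiction then comes from a transcendence-type result (Theorem~\ref{lem}): if $\psi_0\in\widetilde{\mathcal F}\setminus\R$ then $e^{\psi_0}\notin\widetilde{\mathcal F}^{\mu_1}\cdots\widetilde{\mathcal F}^{\mu_\ell}\setminus\R$. Since the Euclidean diastasis $|\eta(z)|_l^2$ lies in $\widetilde{\mathcal F}$, the identity $e^{|\eta(z)|_l^2}=\prod_k(\cdot)^{\gamma_k}$ forces $\eta$ constant. This is the same Nash-algebraic mechanism as in \cite{Cheng2021}, not a boundedness argument.

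For part (i) the situation is the same: the argument of \cite{LMpams} you invoke is itself Nash-algebraic, not based on boundedness of $e^{-D}$. Writing the KRS equation locally as
\[
\det\Bigl[\tfrac{\partial^{2} D^g_{p}}{\partial z_{\alpha} \partial\bar z_{\beta}}\Bigr]=e^{-\frac{\lambda}{2}D^g_p-\frac{f_X}{2}+h+\bar h},
\]
one observes that both the determinant and $e^{\frac{\lambda}{2}D^g_p}$ lie in the appropriate Nash-algebraic class (the latter by Theorem~\ref{thmhide} and the hereditary property), so $e^{\zeta}$ with $\zeta=-\tfrac{f_X}{2}+h+\bar h\in\widetilde{\mathcal F}$ lies in $\widetilde{\mathcal F}^{\mu_1}\cdots\widetilde{\mathcal F}^{\mu_r}\widetilde{\mathcal F}$; Theorem~\ref{lem} forces $\zeta$ constant, hence $L_X\omega=0$. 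The structural fact about $g_\Omega$ that drives everything is the \emph{rationality} of the $F_k$, not any exhaustion or growth property of the diastasis.
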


In \cite{diloi} the authors inspired by Umehara's work \cite{UmearaD}   have christened  two \K\ manifolds $(M_i, g_i)$, $i=1, 2$ to be {\em relatives} if there exists a \K\ manifold  $(M, g)$ and  two  holomorphic isometries  $\varphi_i:M\rightarrow M_i$, $i=1, 2$
(for update results on relatives \K\  manifolds  the reader is referred  the  the survey paper \cite{YUANYUAN}  
and reference therein). In this language we can rephrase  (ii) in Theorem \ref{mainteor} by saying  that a homogeneous bounded domain and the complex (definite or indefinite) flat space are not relatives (see also \cite{Mossa}).

It is also  worth  noticing  that a homogeneous bounded domain admits a holomorphic isometry into the  infinite dimensional complex projective space (see \cite{LMgeomded}).
Unfortunately this is not of  any help in the proof of (i) of Theorem \ref{mainteor} since there are  examples of non trivial KRS which admits holomorphic isometries
 into the infinite dimensional complex projective space (see \cite{LSZpac}).

The proof of Theorem \ref{mainteor} is based on  Theorem \ref{lem}  which is a transcendental result on holomorphic Nash algebraic functions
and on Theorem \ref{thmhide} (unpublished) due to Hishi Hideyuki which gives an explicit  description of the  structure of a  \K\ potential  of the  homogeneous \K\  metric $g_\Omega$
on a homogeneous bounded domain.

The paper contains three other sections. Sections \ref{secnash} and Section \ref{appendix}
 are dedicated to the proofs of Theorem \ref{lem} and Theorem \ref{thmhide}, respectively.
 Section \ref{main} contains the proof of Theorem \ref{mainteor}.
 
\vskip 0.3cm
The authors are indebted to Hishi Hideyuki for the proof of  Theorem \ref{thmhide}
and for the possibility of including it in our paper.

\section{A result on holomorphic Nash algebraic functions}\label{secnash}

Let $\mathcal N^m$ be the set real analytic function $\xi : V\subset \C^m \to \R $ defined in some open \ngh\ $V\subset \C^m$, such that its real analytic extension $\tilde\xi (z, w)$ in a \ngh\ of the diagonal of $V \times \operatorname {Conj} V$ is a holomorphic Nash algebraic function (for background material on Nash functions, we refer the readers to
\cite{XJHuang} and \cite{Tworz}). We define
$$
\mathcal F= \left\{\xi \left(f_1,\dots,f_m\right) \mid \xi \in \mathcal N^m,\ f_j \in \OO_0,\ j=1,\dots,m, \ m>0  \right\},
$$
where $\OO_0$ denotes the germ of holomorphic functions around $0\in \C$ and
we set 
\begin{equation}\label{tildeF}
\wi {\mathcal F}= \left\{\psi \in \mathcal F \mid \psi  \text{ is of diastasis-type} \right\}.
\end{equation}
Here we say (see also \cite{LMpams}) that  a real analytic function defined on a neighborhood   $U$ of   a point $p$
of a complex manifold $M$ is of  {\em diastasis-type} if 
in one (and hence any) coordinate system 
$\{z_1, \dots , z_n\}$ centered at $p$ its  expansion in $z$ and $\bar z$ 
does not contain non constant purely holomorphic or anti-holomorphic terms (i.e. of the form $z^{j}$ or $\bar{z}^{j}$ with  $j > 0$).

The key element in the proof  of Theorem \ref{mainteor} is the following Theorem \ref{lem}.  
This theorem  generalizes \cite[Theorem 2.1]{LMpams}.
Moreover its  proof can be considered an extension of the techniques used in the proof of  \cite[Theorem 1]{Cheng2021}.

\begin{theor}\label{lem}
Let $\psi_0\in \F\setminus \R$.  Then for every $\mu_1,\dots,\mu_\ell \in \R$ we have
\begin{equation}\label{fundeq}
e^{\psi_0}\notin \F^{\mu_1}\cdots \F^{\mu_\ell}\setminus \R
\end{equation}
where $ \F^{\mu_1}\cdots \F^{\mu_\ell}=\left\{\psi_1^{\mu_1}\cdots \psi_\ell^{\mu_\ell} \mid \psi_1,\dots,\psi_\ell \in \F\right\} $. 
\end{theor}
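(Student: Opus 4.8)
The plan is to argue by contradiction, following and extending the strategy of \cite[Theorem 2.1]{LMpams} and \cite[Theorem 1]{Cheng2021}. Suppose that $\psi_0 \in \F \setminus \R$ but nevertheless $e^{\psi_0} = \psi_1^{\mu_1}\cdots\psi_\ell^{\mu_1}$ for some $\psi_1,\dots,\psi_\ell \in \F$ with the product lying in $\F \setminus \R$ (so not identically constant). The first step is to pass to the analytic complexifications: writing each $\psi_j$ as $\xi_j(f_{j,1},\dots,f_{j,m_j})$ with $\xi_j \in \mathcal N^{m_j}$ and $f_{j,k}\in\OO_0$, the polarization $\widetilde{\psi_j}(z,w)$ is a holomorphic Nash algebraic function of $(z,w)$ near $(0,0)$, because composition of a Nash function with holomorphic germs is Nash. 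Hence the relation becomes an identity of holomorphic functions of two variables $(z,w)$ in a neighbourhood of the origin,
\begin{equation}\label{plan-polar}
\exp\bigl(\widetilde{\psi_0}(z,w)\bigr) = \widetilde{\psi_1}(z,w)^{\mu_1}\cdots\widetilde{\psi_\ell}(z,w)^{\mu_\ell},
\end{equation}
where the right-hand side is algebraic over the field of rational functions $\C(z,w)$ up to the $\mu_j$-th power branches, and the diastasis-type hypothesis guarantees that the relevant branches are single-valued near the origin (the $\widetilde{\psi_j}$ do not vanish to problematic order, cf. the normalization $\psi_j(0)\neq 0$ after a harmless rescaling, or one works with $\log\psi_j$ which is again of the required type).

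The core of the argument is then a transcendence/algebraicity dichotomy. Taking logarithms in \eqref{plan-polar} gives $\widetilde{\psi_0}(z,w) = \sum_j \mu_j \log \widetilde{\psi_j}(z,w)$; differentiating in $z$ (say) yields
\begin{equation}\label{plan-deriv}
\frac{\partial \widetilde{\psi_0}}{\partial z} = \sum_{j=1}^{\ell} \mu_j \frac{\partial_z \widetilde{\psi_j}}{\widetilde{\psi_j}},
\end{equation}
and the right-hand side is a holomorphic Nash algebraic function of $(z,w)$, since $\widetilde{\psi_j}$ and its derivatives are Nash and the ring of Nash germs is closed under the field operations (and under $\partial_z$). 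Therefore $\partial_z\widetilde{\psi_0}$ is Nash algebraic, and by induction all partial derivatives of $\widetilde{\psi_0}$ are Nash algebraic — but from \eqref{plan-polar} we also have $e^{\widetilde{\psi_0}}$ expressed through the $\widetilde{\psi_j}$, and the upshot is that $e^{\widetilde{\psi_0}}$ must satisfy an algebraic equation over $\C(z,w)$ with Nash coefficients built from the $\widetilde{\psi_j}$. The classical fact that the exponential of a non-constant holomorphic germ cannot be algebraic over the field of Nash (in particular algebraic) functions — equivalently, that $e^{h}$ for $h$ a non-constant Nash germ is transcendental over the Nash field, which follows from comparing the number of branches over a generic punctured neighbourhood / from Ax–Schanuel type considerations or, more elementarily, from the functional-equation argument of \cite{UmearaD} and \cite{LMpams} adapted to two variables — then forces $\widetilde{\psi_0}$ to be constant, hence $\psi_0 \in \R$, a contradiction.

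The main obstacle, and where the real work lies, is exactly the transcendence step: one must rule out $e^{\widetilde{\psi_0}}$ being algebraic over the field generated by the Nash functions $\widetilde{\psi_1},\dots,\widetilde{\psi_\ell}$ even though that field is itself transcendental over $\C(z,w)$ (the $\widetilde{\psi_j}$ need not be rational). The resolution is to observe that each $\widetilde{\psi_j}$ is \emph{algebraic}, i.e. Nash, so the field $\C\bigl(z,w,\widetilde{\psi_1},\dots,\widetilde{\psi_\ell}\bigr)$ — more precisely its algebraic closure in the Nash germs — is still an algebraic extension of the rational function field in finitely many variables, hence a field of \emph{algebraic functions}; and $e^{h}$ is never algebraic over a field of algebraic functions unless $h$ is constant. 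This is precisely the sort of statement proved transcendentally in the one-variable case in \cite[Theorem 2.1]{LMpams}; here one needs its two-variable analogue, which is where the extension over \cite{LMpams} and the borrowing of techniques from \cite[Theorem 1]{Cheng2021} enter. Concretely I would: (a) reduce to a curve by restricting \eqref{plan-polar} to a generic complex line $w = c z$ through the origin, thereby returning to a one-variable identity $e^{\widetilde{\psi_0}(z,cz)} = \prod_j \widetilde{\psi_j}(z,cz)^{\mu_j}$ between germs at $0 \in \C$; (b) note that the restricted $\widetilde{\psi_j}(z,cz)$ are still Nash algebraic in $z$ for generic $c$; (c) apply the one-variable result of \cite[Theorem 2.1]{LMpams} (or its proof) to conclude $\widetilde{\psi_0}(z,cz)$ is constant for generic $c$; (d) deduce $\widetilde{\psi_0}$ is constant, hence $\psi_0 \in \R$. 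The only delicate points in carrying this out are ensuring the branches of $\widetilde{\psi_j}^{\mu_j}$ are chosen consistently under restriction (handled by the diastasis-type normalization, which makes $\widetilde{\psi_j}(0,0)$ a well-defined nonzero value) and checking that "generic $c$" can be chosen to avoid the finitely many bad lines along which a $\widetilde{\psi_j}$ degenerates or along which the restriction becomes constant for spurious reasons; both are routine given the Nash algebraicity.
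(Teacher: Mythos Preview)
There is a genuine gap. Your central claim that ``the polarization $\widetilde{\psi_j}(z,w)$ is a holomorphic Nash algebraic function of $(z,w)$, because composition of a Nash function with holomorphic germs is Nash'' is false. By definition an element of $\F$ has the form $\xi(f_1,\dots,f_m)$ with $\xi\in\mathcal N^m$ Nash, but the $f_i$ are \emph{arbitrary} germs in $\OO_0$; composing a Nash function with a transcendental germ (take $\xi(u)=|u|^2$ and $f(z)=e^z$) does not give a Nash function. Thus the polarizations $\widetilde{\psi_j}(z,w)=\tilde\xi_j(F_j(z),F_j(w))$ are Nash in the \emph{intermediate} variables $(F_j(z),F_j(w))$, but in general not in $(z,w)$, and the field $\C\bigl(z,w,\widetilde{\psi_1},\dots,\widetilde{\psi_\ell}\bigr)$ is \emph{not} an algebraic extension of $\C(z,w)$. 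Your transcendence step (``$e^h$ is never algebraic over a field of algebraic functions'') therefore does not apply, and restricting to a line $w=cz$ does not rescue it: $\widetilde{\psi_j}(z,cz)$ is still not Nash in $z$, so the one-variable result of \cite{LMpams} cannot be invoked either. You seem to sense the difficulty when you note the field ``is itself transcendental over $\C(z,w)$'', but then immediately re-assert that each $\widetilde{\psi_j}$ is Nash, which is exactly the false step.

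The paper's proof is built precisely to overcome this. It collects all the inner holomorphic germs $\varphi_1,\dots,\varphi_s$ appearing in the $\psi_k$, selects a transcendence basis $\varphi_1,\dots,\varphi_l$ of $\FR(\varphi_1,\dots,\varphi_s)$ over the rational function field $\FR$, and uses the algebraic implicit function theorem to write every $\varphi_j(z)=\hat\varphi_j(z,\Phi(z))$ with $\hat\varphi_j$ genuinely Nash algebraic in the enlarged variables $(z,X)\in\C\times\C^l$, where $\Phi(z)=(\varphi_1(z),\dots,\varphi_l(z))$. The key step --- and this is where the diastasis-type hypothesis is actually used --- is to prove that the identity $e^{\psi_0}=\psi_1^{\mu_1}\cdots\psi_\ell^{\mu_\ell}$, a priori valid only on the graph $X=\Phi(z)$, extends to an identity in the \emph{free} variables $(z,X,w)$; this is done by showing $\partial_w\Psi(z,X,w)\equiv 0$ via the algebraic independence of $\varphi_1,\dots,\varphi_l$. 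Only after this ``freeing of variables'' are all functions Nash in $(z,X)$ for each fixed $w$, and \cite[Lemma~2.2]{HYbook} can be applied. Your plan omits this entire mechanism, which is where the substance of the argument lies.
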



\begin{proof}[Proof of Theorem \ref{lem}]
Assume that 
\begin{equation}\label{egznew}
e^{\psi_0}=\psi_1^{\mu_1}\cdots \psi_\ell^{\mu_\ell} \in \F^{\mu_1}\cdots \F^{\mu_\ell}
\end{equation}
with 
\begin{equation}\label{egz}
\psi_k=\xi _k\left(f_{1}^{(k)},\dots,f_{m_k}^{(k)}\right)\in \F,\  k=0,\dots,\ell .
\end{equation}
Let us rename the functions involved in \eqref{egz} by
\begin{equation}\label{eqfvarf}
\left(\varphi_1, \dots , \varphi_s\right)=\left(f_{1}^{(0)},\dots,f_{m_0}^{(0)},\dots, f_1^{(\ell)},\dots,f^{(\ell)}_{m_\ell}\right)
\end{equation}
and let
$$
S=\left\{\varphi_1, \dots , \varphi_s\right\}.
$$ 

Let $D$ be an open neighborhood  of the origin of $\C$ on which each $\varphi_j$,  $j=1, \dots ,s$, is defined.
Consider   the field  $\mathfrak R$ of rational function on $D$ and its  field extension 
$
\mathfrak F = \mathfrak R \left(S\right),
$
namely, the smallest subfield of the field of the  meromorphic functions on $D$, containing  rational functions and the elements of $S$. Let $l$ be the transcendence degree  of the field extension $\FF/\FR$. 
If $l=0$, then each element in $S$ is holomorphic  Nash algebraic and hence $\psi_0$ is forced to be constant  by \cite[Lemma 2.2]{HYbook}. Assume then that $l>0$.
Without loss of generality we can assume that  $\CG=\left\{\varphi_1,\dots,\varphi_l\right\}\subset S$  is a maximal algebraic independent subset over $\FR$. Then   there exist minimal polynomials $P_j\left(z, X,Y\right)$, $X=\left(X_1,\dots,X_l\right)$,  such that 
$$
P_j\left(z,\Phi(z), \va_j(z)\right)\equiv 0, \ \forall j=1, \dots ,s,
$$
where $\Phi(z)=\left(\va_1(z),\dots,\va_l(z)\right)$.

Moreover, by  the definition of minimal polynomial 
 $$
 \frac{\de P_j\left(z,X,Y\right)}{\de Y}\left(z,\Phi (z),\va_j(z)\right)\not\equiv 0, \ \forall j=1, \dots ,s.
 $$
  on $D$.
 Thus, by the algebraic version of the existence and uniqueness part of the implicit function theorem, there exist a connected open subset $U\subset D$ with $0\in \ov U$ and Nash algebraic functions 
 $\hat \va_j(z,X)$,  defined in a neighborhood $\hat U$ of $\left\{(z, \Phi(z)) \mid z \in U \right\}\subset \C^n \times \C^{l}$, such that
$$
\va_j(z)=\hat \va_j\left(z,\Phi(z)\right), \ \forall j=1, \dots ,s.
$$
for any $z\in U$.
%
%
Let us denote
$$
\left(\hat f_{1}^{(0)}(z, X),\dots,\hat f_{m_0}^{(0)}(z, X),\dots,\hat f_{1}^{(\ell)}(z, X),\dots,\hat f_{m_\ell}^{(\ell)}(z, X)\right)=(\hat\varphi_1 (z, X), \dots , \hat\varphi_s(z, X)),
$$ 
(notice that, by \eqref{eqfvarf}, $\hat f_{i}^{(k)}(z, \Phi(z))=\hat f_{i}^{(k)}(z)$ for all $k=0, \dots ,\ell$ and $i=1, \dots ,m_k$).
We define
$$
\hat F_k(z,X):=\left(\hat f_{1}^{(k)}(z,X),\dots,\hat f_{m_k}^{(k)}(z,X)\right), k=0, \dots ,\ell.
$$

Consider the function
\begin{equation*}\begin{split} 
 \Psi(z,X,w)&:=
{\tilde\xi _0\left(\hat F_0(z,X), F_0(w) \right)}\\
&-{\mu_1}\log\left(\tilde\xi _1\left(\hat F_1(z,X), F_1(w)\right)\right)-\dots - \mu_\ell\log\left(\tilde\xi _\ell\left(\hat F_\ell(z,X), F_\ell(w)\right)\right),
\end{split}\end{equation*}
where $\tilde\xi _j$ is the real analytic extension of $\xi _j$ in a \ngh\ of $(0,0) \in \C^{m_j} \times \operatorname {Conj} \C^{m_j}$ and $F_k(w)=\left(f_{1}^{(k)}(w),\dots,f_{m_k}^{(k)}(w)\right)$. 
By shrinking $U$ if necessary we can assume $\Psi(z,X,w)$  is defined on  $\hat U \times U$.

We claim that $\Psi(z,X,w)$ vanishes identically on  this set.
Recalling that $\psi _k(z,w)=\tilde \xi _k \left(F_k(z),F_k(w)\right)$ is of diastasis-type, we see that 
$\tilde \xi _k (\hat F_k(z,X),F_k(0))=\psi _k(0)$,
 $k=0,\dots,\ell$. Since $0\in \ov U$, it follows by \eqref{egznew}
that  $\Psi(z,X,0)\equiv 0$.
 Hence, in order to prove the claim, it is enough to show that 
$(\de_w \Psi)(z,X,w)\equiv 0$ for all $w\in U$.  Assume, by contradiction, that there exists $w_0\in U$ such that $(\de_w \Psi)(z,X,w_0)\not\equiv 0$. Since $(\de_w \Psi)(z,X,w_0)$ is Nash algebraic in $(z,X)$ there exists a holomorphic polynomial $P(z,X,t)=A_d(z,X)t^d+\dots+A_0(z,X)$ with $A_0(z,X)\not\equiv 0$ such that  $P(z,X,(\de_w \Psi)(z,X,w_0))=0$. 
Since,  by \eqref{egznew}  and  \eqref{egz} we have
 $\Psi(z,\Phi (z),w)\equiv 0$ we get $(\de_w \Psi)(z,\Phi (z),w)\equiv 0$. Thus  $A_0(z,\Phi (z))\equiv 0$
 which contradicts  the fact that $\va_1(z),\dots,\va_l(z)$ are algebraic independent over $\FR$. Hence $(\de_w \Psi)(z,X,w_0)\equiv 0$ and the claim is proved.

Therefore
\begin{equation*}\begin{split} 
e^{\tilde\xi _0\left(\hat F_0(z,X), F_0(w) \right)} = 
\left(\tilde\xi _1\left(\hat F_1(z,X), F_1(w)\right)\right)^{\mu_1}\cdots \left(\tilde\xi _\ell\left(\hat F_\ell(z,X), F_\ell(w)\right)\right)^{\mu_\ell},
\end{split}\end{equation*}
for every $(z,X,w)\in \hat U \times U$. By fixing  $w\in U$ and  applying \cite[Lemma 2.2]{HYbook} we deduce that 
$\tilde\xi _0\left(\hat F_0(z,X), F_0(w) \right)$
is constant in $(z,X)$.
Thus, by evaluating at $X=\Phi(z)$ one obtains  that 
$\tilde\xi _0\left(\hat F_0(z), F_0(w) \right)$
is constant for fixed $w$, forcing $\psi_0(z)=\tilde\xi _0\left(\hat F_0(z), F_0(z) \right)$  to be constant  for all $z$. The proof of the theorem is complete.
\end{proof}

\section{\K\ potential of homogeneous \K\ metrics on complex bounded domains (by Hideyuki Ishi)}\label{appendix}

Let $\mathcal{D}_{r}$ be the so-called Siegel upper half plane of rank $r$, that is, the set of complex symmetric matrices $z \in \operatorname{Sym}(r, \mathbb{C})$ of size $r \times r$ such that the imaginary part $\Im z$ is positive definite. Note that $\mathcal{D}_{1}=\mathbb{H}$. Let $H_{r}$ be the group of real lower triangular matrices with positive diagonal entries, and define
\begin{equation*}
\mathcal{S}_{r}:=\left\{b(v, T):=\left(\begin{array}{cc}
I_{r} & v \\
& I_{r}
\end{array}\right)\left(\begin{array}{cc}
T & \\
& \mathrm{t}\left(T^{-1}\right)
\end{array}\right) \ |\  v \in \operatorname{Sym}(r, \mathbb{R}), T \in H_{r}\right\}
\end{equation*}
which is a maximal real split solvable Lie subgroup of the real symplectic group $S p(r, \mathbb{R})$. The solvable group $\mathcal{S}_{r}$ acts on $\mathcal{D}_{r}$ simply transitively by
\begin{equation*}
b(v, T) \cdot z:=v+T z^{\mathrm{t}} T ,\quad\left(b(v, T) \in \mathcal{S}_{r}, z \in \mathcal{D}_{r}\right)
\end{equation*}

For a complex symmetric matrix $w \in \operatorname{Sym}(r, \mathbb{C})$ and $k=1, \ldots, r$, we denote by $\Delta_{k}(w)$ the principal $\operatorname{minor} \operatorname{det}\left(\begin{array}{ccc}w_{11} & \ldots & w_{k 1} \\ \vdots & \ddots & \vdots \\ w_{k 1} & \ldots & w_{k k}\end{array}\right)$. For $\underline{s}:=\left(s_{1}, \ldots, s_{r}\right) \in \mathbb{C}^{r}$, define \begin{equation*} \Delta_{\underline{s}}(w):=\Delta_{1}(w)^{s_{1}} \prod_{k=2}^{r}\left(\frac{\Delta_{k}(w)}{\Delta_{k-1}(w)}\right)^{s_{k}}, \end{equation*}
which is called a generalized power function of $w$ \cite[p. 122]{Faraut}. We have the formulas
\begin{enumerate}
\item $\Delta_{\underline{s}}\left(T w^{\mathrm{t}} T\right)=\left(T_{11}^{2 s_{2}} T_{22}^{2 s_{2}} \ldots T_{r r}^{2 s_{r}}\right) \Delta_{\underline{s}}(w) \quad\left(T \in H_{r}, w \in \operatorname{Sym}(r, \mathbb{C})\right)$,
\item $\underline{s}=(\alpha, \ldots, \alpha) \Rightarrow \Delta_{\underline{s}}(w)=(\operatorname{det} w)^{\alpha}$,
\item $w=\operatorname{diag}\left(w_{11}, w_{22}, \ldots, w_{r r}\right) \Rightarrow \Delta_{\underline{s}}(w)=w_{11}^{2 s_{1}} w_{22}^{2 s_{2}} \ldots w_{r r}^{2 s_{r}}$.
\end{enumerate}
For $\underline{\gamma}=\left(\gamma_{1}, \ldots, \gamma_{r}\right) \in \mathbb{R}_{>0}^{r}$, define a positive function $F_{\gamma}: \mathcal{D}_{r} \rightarrow \mathbb{R}_{>0}$ by $F_{\underline{\gamma}}(z):=\Delta_{-\underline{\gamma}}(\Im z)(z \in$ $\left.\mathcal{D}_{r}\right)$. Then it is known that $\log F_{\underline{\gamma}}$ is plurisubharmonic, which means that $\log F_{\underline{\gamma}}$ is a potential function of a Kähler metric $g_{\underline{\gamma}}$ on $\overline{\mathcal{D}}_{r}$. By (1), we have
\begin{equation*}
\log F_{\underline{\gamma}}(b(v, T) z)=\log F_{\underline{\gamma}}(z)+\log \left(T_{11}^{-2 \gamma_{1}} \ldots T_{r r}^{-2 \gamma_{r}}\right) \quad\left(z \in \mathcal{D}_{r}, b(v, T) z\right),
\end{equation*}
so that the metric $g_{\underline{\gamma}}$ is invariant under the action of $\mathcal{S}_{r}$ on $\mathcal{D}_{r}$. We see from Dorfmeister \cite{DORF2} that any $\mathcal{S}_{r}$-invariant Kähler metric on $\mathcal{D}_{r}$ is of the form $g_{\underline{\gamma}}$. Furthermore, any homogeneous Kähler metric $g$ on $\mathcal{D}_{r}$ is equivalent to some $g_{\underline{\gamma}}$, which means that there exists a biholomorphic map $\varphi: \mathcal{D}_{r} \rightarrow \mathcal{D}_{r}$ such that $g=\varphi^{*} g_{\underline{\gamma}}$. It is also known that the biholomorphic map $\varphi$ on the Siegel upper half plane $\mathcal{D}_{r}$ is birational. Note that $F_{\underline{\gamma}}(z)$ is a rational function if and only if all $\gamma_{k}$ are positive integers. Let us introduce a rational function $F_{k}(k=1, \ldots, r)$ defined by
\begin{equation*}
F_{k}(z):= \begin{cases}\Delta_{1}(\Im z) & (k=1) \\ \Delta_{k}(\Im z) / \Delta_{k-1}(\Im z) & (k=2, \ldots, r)\end{cases}
\end{equation*}
Then we have
\begin{equation*}
\log F_{\underline{\gamma}}(z)=\gamma_{1} \log F_{1}(z)+\gamma_{2} \log F_{2}(z)+\cdots+\gamma_{r} \log F_{r}(z) \quad\left(z \in \mathcal{D}_{r}\right) .
\end{equation*}
This argument can be generalized to any homogeneous Siegel domain.

\begin{theor}\label{thmhide}
Let $\Omega$ be a homogeneous Siegel domain and $g$ a homogeneous Kähler metric on $\W$  Then there exist rational functions $F_{1}, \ldots, F_{r}$ and positive numbers $\gamma_{1}, \ldots, \gamma_{r}$ such that $\sum_{k=1}^{r} \gamma_{k} \log F_{k}(z)$ is a potential function of $g$.

In particular, when $g$ is the Bergman metric the coefficients $\gamma_{1}, \ldots, \gamma_{r}$ are positive integers.
\end{theor}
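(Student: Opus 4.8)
The plan is to run, for an arbitrary homogeneous Siegel domain $\Omega$, exactly the argument carried out above for the Siegel upper half plane $\mathcal{D}_{r}$, with the cone of positive definite real symmetric matrices replaced by a general homogeneous convex cone. First I would recall the structure theory of Piatetski-Shapiro, Vinberg and Gindikin: up to biholomorphism $\Omega$ is a Siegel domain $D(C,Q)=\{(u,v):\Im u-Q(v,v)\in C\}$ over a homogeneous convex cone $C$ of rank $r$ in a real vector space $V$, with $Q$ a $C$-positive Hermitian map, and the affine automorphism group of $\Omega$ contains a split solvable Lie subgroup $G$ acting simply transitively; the cone $C$ itself carries a simply transitive split solvable group $H$, and $V$ has a normal (root-space) decomposition $V=\bigoplus_{i\le j}V_{ij}$. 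As in the $\mathcal{D}_{r}$ discussion, the generalized power functions $\Delta_{\underline{s}}\colon C\to\R_{>0}$, $\underline{s}\in\C^{r}$, are relative invariants of $H$, i.e.\ $\Delta_{\underline{s}}(h\cdot y)=\chi_{\underline{s}}(h)\,\Delta_{\underline{s}}(y)$ for a character $\chi_{\underline{s}}$ of $H$, and they factor as $\Delta_{\underline{s}}=\prod_{k=1}^{r}F_{k}^{\,s_{k}}$ with $F_{1},\dots,F_{r}$ rational (indeed polynomial) functions on $V$ generalizing the principal-minor quotients $\Delta_{k}/\Delta_{k-1}$.

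Next I would transplant these to $\Omega$ through the height map $\rho\colon\Omega\to C$, $\rho(u,v):=\Im u-Q(v,v)$, setting $F_{\underline{\gamma}}:=\Delta_{-\underline{\gamma}}\circ\rho$ for $\underline{\gamma}\in\R_{>0}^{r}$. A computation with the $G$-action, parallel to the transformation formula displayed above for $\log F_{\underline{\gamma}}$ on $\mathcal{D}_{r}$, shows that each $g\in G$ changes $\log F_{\underline{\gamma}}$ only by an additive constant; combined with the plurisubharmonicity of $\log F_{\underline{\gamma}}$ this exhibits
\[
\log F_{\underline{\gamma}}=\sum_{k=1}^{r}\gamma_{k}\log(F_{k}\circ\rho)
\]
as a $G$-invariant \K\ potential on $\Omega$, with each $F_{k}\circ\rho$ rational, defining a metric $g_{\underline{\gamma}}$. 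It then remains to know that every homogeneous \K\ metric $g$ on $\Omega$ equals $\varphi^{*}g_{\underline{\gamma}}$ for some $\varphi\in\aut(\Omega)$ and some $\underline{\gamma}\in\R_{>0}^{r}$, positivity of the $\gamma_{k}$ being exactly positive definiteness of $g_{\underline{\gamma}}$: this is Dorfmeister's classification of homogeneous \K\ metrics on homogeneous Siegel domains, the general version of what is quoted from \cite{DORF2} above. Since (as recorded for $\mathcal{D}_{r}$ above) such a $\varphi$ is birational, $(\log F_{\underline{\gamma}})\circ\varphi=\sum_{k}\gamma_{k}\log(F_{k}\circ\rho\circ\varphi)$ is itself a \K\ potential of $g$ and each $F_{k}\circ\rho\circ\varphi$ is rational; relabelling these as $F_{1},\dots,F_{r}$ gives the asserted form of the potential.

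For the Bergman metric I would invoke the explicit computation of its kernel on a homogeneous Siegel domain (Gindikin; see also Ishi): $K_{\Omega}(z,\bar z)=c\prod_{k=1}^{r}F_{k}(\rho(z))^{-\gamma_{k}}$, the exponents $\gamma_{k}$ being given explicitly in terms of the dimensions of the root spaces $V_{ij}$ and of the Hermitian-form part of $Q$. Being produced by dimension counts these $\gamma_{k}$ are positive integers — equivalently, the Bergman kernel of a homogeneous Siegel domain is rational — and since $-\log K_{\Omega}(z,\bar z)$ is a \K\ potential of the Bergman metric, this is the ``in particular'' statement.

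The only genuinely nontrivial inputs, and the step I expect to be the main obstacle, are the two facts used as black boxes in the $\mathcal{D}_{r}$ discussion and needed here in their general form: that the $G$-invariant \K\ metrics on a homogeneous Siegel domain are exhausted by the $g_{\underline{\gamma}}$ and that every homogeneous \K\ metric is a biholomorphic pullback of one of them (Dorfmeister's classification), and that the equivalence $\varphi$ may be taken birational. Both belong to the theory of normal $j$-algebras and homogeneous cones; granted them, everything else — the factorization $\Delta_{-\underline{\gamma}}=\prod_{k}F_{k}^{-\gamma_{k}}$, the additive-constant transformation of $\log F_{\underline{\gamma}}$ under $G$, and the extraction of the integer Bergman exponents from Gindikin's formula — is routine.
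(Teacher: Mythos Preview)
Your sketch is correct and follows exactly the line the paper indicates: the paper gives no proof of Theorem~\ref{thmhide} beyond the detailed treatment of the model case $\mathcal{D}_{r}$, the sentence ``This argument can be generalized to any homogeneous Siegel domain,'' and the references to Gindikin \cite{GIN} for the explicit rational $F_{k}$ and to \cite{SIL} for the relation between $g$ and the $\gamma_{k}$. Your expansion---Piatetski--Shapiro/Vinberg structure theory of $\Omega=D(C,Q)$, generalized power functions $\Delta_{\underline{s}}$ on the homogeneous cone $C$, transplant via the height map $\rho$, Dorfmeister's classification of $G$-invariant \K\ metrics, birationality of the equivalence $\varphi$, and Gindikin's Bergman-kernel formula yielding integer exponents---is precisely the generalization the paper has in mind, with the same black boxes (Dorfmeister; birationality of $\aut(\Omega)$) flagged. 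One small over-claim: the quotients $F_{k}=\Delta_{k}/\Delta_{k-1}$ are rational but not in general polynomial on $V$; this is harmless since only rationality is used.
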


A concrete description of the rational functions $F_{k}(k=1, \ldots, r)$ can be found in Gindikin \cite{GIN}. For the relation between the metric $g$ and the coefficients $\gamma_{1}, \ldots, \gamma_{r}$, see \cite[Proof of Theorem 4]{SIL}.

\section{Proof of Theorem \ref{mainteor}}\label{main}

Given a complex manifold $M$ endowed with a real analytic 
\K \ metric $g$,
Calabi introduced,
in a neighborhood of a point 
$p\in M$,
a very special
\K\ potential
$D^g_0$ for the metric
$g$, which 
he called 
{\em diastasis} (the reader is referred either to \cite{Cal} or to \cite{LoiZedda-book}
for details).

Let $\psi$ be a real analytic \K\ potential for $g$ centred at $p$, 
by duplicating the variables $z$ and $\bar{z}$, the potential $\psi$ can be complex analytically continued to a function $\tilde{\psi}$ defined in a neighborhood $U  \times U$ of $(p, \bar{p}) \in  M \times M$, then the diastasis $D^g_p:U\to\R$ is defined by 
\begin{equation}\label{defdiastasis}
D_p(z)=\tilde\psi(z,\bar z)+ \tilde\psi(p,\bar p) -\tilde\psi(z,\bar p)-\tilde\psi(p,\bar z).
\end{equation}

Among all the potentials the diastasis
is characterized by the fact that 
in every coordinate system 
$\{z_1, \dots , z_n\}$ centered at $p$
\begin{equation}\label{eqcardi}
D^g_p(z, \bar z)=\sum _{|j|, |k|\geq 0}
a_{jk}z^j\bar z^k,
\end{equation}
with 
$a_{j 0}=a_{0 j}=0$
for all multi-indices
$j$.
Clearly the diastasis $D^g_p$ is  a function of diastasis-type as defined in Section \ref{secnash}.

The next general  proposition will be used in the proof of Theorem \ref{mainteor}.
\begin{prop}\label{mainprop}
Let $(M, g)$ be a real analytic  \K\ manifold and let   $\{z_1, \dots , z_n\}$be holomorphic coordinates  centered at a point  $p$, in a neighborhood $U\subset M$ where the diastasis $D^g_p$ is defined. Assume that 
\begin{equation}\label{Dpc=1}
e^{D^g_p}\in \F^{\gamma_1}\cdots \F^{\gamma_{r}},
\end{equation}
for some  $\gamma_1, \dots, \gamma_{r} \in \R$. 
Then any KRS $(g, X)$ on $M$ is trivial, i.e. $g$ is KE.
\end{prop}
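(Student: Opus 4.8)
The plan is to reduce the KRS equation to a relation of the form required to apply Theorem \ref{lem}. Suppose $(g,X)$ is a KRS on $M$, so $\Ric_g = \lambda g + L_X g$. Working in the coordinates $\{z_1,\dots,z_n\}$ centered at $p$, I would first recall that since $g$ is \K\ with local potential the diastasis $D^g_p$, the Ricci form satisfies $\Ric_g = -i\de\deb \log\det(g_{i\bar j})$ where $g_{i\bar j} = \de^2 D^g_p/\de z_i \de\bar z_j$. The soliton equation, read on the level of potentials, says that $-\log\det(g_{i\bar j})$ differs from $\lambda D^g_p$ by the real part of a holomorphic function plus the contribution of the Lie derivative term; more precisely, writing $X = \sum X^i \de/\de z_i$ for the holomorphic solitonic field, there is a standard computation (as in \cite{LMpams}, \cite{BG}) showing that $L_X g$ is, at the potential level, governed by a function whose $\de\deb$ gives the soliton defect, so that
\begin{equation*}
-\log\det\bigl(g_{i\bar j}(z,\bar z)\bigr) = \lambda\, D^g_p(z,\bar z) + h(z,\bar z)
\end{equation*}
where $h$ is, up to pluriharmonic terms, determined by $X$ and in fact $e^{h}$ lands in $\F$ or a product of powers of elements of $\F$ once one polarizes.

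Next I would polarize: replacing $\bar z$ by an independent variable $w$ and using that $D^g_p$ is of diastasis-type, the left-hand side $\det(g_{i\bar j})$ is itself (after polarization) an element of $\F$ up to the ambiguity of holomorphic factors, because $g_{i\bar j}$ is obtained by differentiating $\tilde\psi$, which lies in the algebra generated by $\OO_0$ and real-analytic Nash-type germs. The hypothesis \eqref{Dpc=1} says $e^{D^g_p} \in \F^{\gamma_1}\cdots\F^{\gamma_r}$. Combining, I would arrange matters so that $e^{c\, D^g_p}$ for some constant $c$ (coming from $\lambda$) equals, on a neighborhood of $0$, a product of powers of elements of $\F$, i.e. lies in $\F^{\mu_1}\cdots\F^{\mu_s}$ for suitable $\mu_j$ — here I absorb the factor $e^h$ and the $\log\det$ factor into additional $\F$-powers. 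Then Theorem \ref{lem}, applied with $\psi_0 = c\,D^g_p$ (which is in $\F$, being of diastasis-type and built from holomorphic germs via a Nash-type real analytic function — this needs $e^{D^g_p}\in\F^{\cdots}$ to guarantee $D^g_p$ itself is in $\F$, or one argues directly), forces $\psi_0$ to be constant unless $e^{\psi_0}\in\R$; running through the dichotomy one concludes the soliton defect $h$ is pluriharmonic, hence $L_X g = \Ric_g - \lambda g$ has vanishing $\de\deb$, which upon integrating against the field forces the soliton to be trivial.

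The main obstacle I anticipate is bookkeeping the holomorphic/anti-holomorphic ambiguities and the precise passage from the soliton PDE to a membership statement in $\F^{\mu_1}\cdots\F^{\mu_\ell}$ with the \emph{same} diastasis-type function $D^g_p$ in the exponent on the left. Concretely: one must check that $\det(g_{i\bar j})$, after polarization and removal of holomorphic factors of the form $z^j$ and $\bar z^k$ (which is exactly what taking the diastasis-type part does, cf. \eqref{eqcardi}), yields a genuine element of $\F$, and that the solitonic term $L_X g$ contributes only through a function of the form $\sum X^i \de D/\de z_i + \overline{\sum X^i \de D/\de z_i}$ plus pluriharmonic junk, so that $e^{(\text{that})}$ is again in $\F$ up to a power. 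Once these structural facts are in place the statement is immediate from Theorem \ref{lem}: either $D^g_p$ is constant, which is absurd for a nonflat metric (and the flat case is trivially KE), or the only way \eqref{fundeq} can fail to be violated is that the extra soliton factor is trivial, i.e. $X$ is Killing and $g$ is KE. I would also note that the totally geodesic refinement (after Umehara \cite{UmearaE}) is not needed here — only the KE conclusion.
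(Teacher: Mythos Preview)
Your outline assembles the right ingredients---the KRS equation at the level of potentials, the fact that $\det(g_{i\bar j})$ is of diastasis-type and lies in $\F$, and the solitonic function $f_X=\sum f_j\,\partial D^g_p/\partial z_j+\bar f_j\,\partial D^g_p/\partial\bar z_j$---but you apply Theorem~\ref{lem} to the wrong function, and this is a genuine gap rather than bookkeeping.

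You propose to take $\psi_0=c\,D^g_p$ in Theorem~\ref{lem}. Two problems arise. First, $D^g_p$ is \emph{not} in $\F$ in general: the hypothesis \eqref{Dpc=1} says $D^g_p=\sum_k\gamma_k\log\psi_k$ with $\psi_k\in\F$, and $\log$ is not Nash algebraic, so you cannot verify the hypothesis $\psi_0\in\F$. Second, and more fatally, if $D^g_p$ \emph{were} in $\F$ then the standing assumption $e^{D^g_p}\in\F^{\gamma_1}\cdots\F^{\gamma_r}$ together with Theorem~\ref{lem} would already force $D^g_p$ constant---a contradiction having nothing to do with the soliton. So no dichotomy of the form ``$D^g_p$ constant or soliton trivial'' can come out of this choice of $\psi_0$. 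Relatedly, you write that you ``absorb the factor $e^{h}$ \dots\ into additional $\F$-powers'', but $e^{h}$ is exactly the soliton contribution you are trying to prove trivial; it is not a priori an element of any $\F^{\mu}$.

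The fix, which is what the paper does, is to put the soliton term in the exponent and everything else on the right. Set $\zeta:=-\tfrac{1}{2}f_X+h+\bar h$. Because the $z_j$-derivatives of $D^g_p=\sum\gamma_k\log\psi_k$ are quotients of Nash functions composed with holomorphic germs, one has $\zeta\in\mathcal F$; and since
\[
e^{\zeta}=\bigl(e^{D^g_p}\bigr)^{\lambda/2}\det(g_{i\bar j})\in\F^{\lambda\gamma_1/2}\cdots\F^{\lambda\gamma_r/2}\cdot\F,
\]
the right-hand side is of diastasis-type, so $\zeta\in\F$. Now Theorem~\ref{lem} applies with $\psi_0=\zeta$ and forces $\zeta$ constant, hence $f_X$ is the real part of a holomorphic function, $L_X\omega=\tfrac{i}{2}\partial\bar\partial f_X=0$, and $g$ is KE.
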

\proof
Let us start to write down the KRS equation 
\begin{equation}\label{eqkrsg}
\Ric_{g}=\lambda g+L_{X}g
\end{equation}
in local complex   coordinates $\{z_1, \dots , z_n\}$ for $M$ centred at $p$ in a neighborhood $U$ of $p$ where the diastasis $D^g_p$ is defined.
Since the solitonic vector field $X$ can be assumed to be  the real part of a holomorphic vector field, we can write 
$$
X=\sum_{j=1}^{n}\left(f_{j} \frac{\partial}{\partial z_j}+\bar{f}_{j}\frac{\partial}{\partial\bar z_j}\right)
$$
for some holomorphic functions $f_{j}, j=1, \ldots, n$, on $U$.

Thus, by the definition of Lie derivative, after a 
straightforward computation we can write on $U$
\begin{equation}\label{LXomega}
L_X\omega=\frac{i}{2}\partial\bar\partial f_X.
\end{equation}
where $\omega$ is the \K\ form associated to $g$ and 
\begin{equation}\label{fXlocal}
f_X=\sum_{j=1}^nf_j\frac{\partial D^g_p}{\partial z_j}+\bar f_j\frac{\partial D^g_p}{\partial \bar z_j}.
\end{equation}

By the $\partial\bar\partial$-Lemma one deduces (cfr. \cite{LMpams} for details) that the local expression of \eqref{eqkrsg} is given by 
\begin{equation}\label{detee}
\det\left[\frac{\partial^{2} D^g_{p}}{\partial z_{a} \partial\bar z_{\beta}}\right]=e^{-\frac{\lambda}{2}D^g_p-\frac{f_X}{2}+h+\bar h},
\end{equation}
for a holomorphic function $h$ on $U$. 
Notice that the  function
 $\det\left[\frac{\partial^{2} D^g_{p}}{\partial z_{a} \partial\bar z_{\beta}}\right]$ is finitely generated by rational functions composed with holomorphic or anti-holomorphic functions around $p$.
Moreover it is real valued, since the matrix  $\frac{\partial^{2} D^g_{p}}{\partial z_{a} \bar\partial z_{\beta}}$ is Hermitian. We conclude that $\det\left[\frac{\partial^{2} D^g_{p}}{\partial z_{a} \partial\bar z_{\beta}}\right]\in \mathcal F$.
Furthermore one can check (cf. \cite{LMpams}) that the function $\det\left[\frac{\partial^{2} D^g_{p}}{\partial z_{a} \partial\bar z_{\beta}}\right]$ is of diastasis-type,  thus  $\det\left[\frac{\partial^{2} D^g_{p}}{\partial z_{a} \partial\bar z_{\beta}}\right]\in \F$. 

From \eqref{fXlocal}, one sees 
that  
$$\zeta:=-\frac{f_X}{2}+h+\bar h\in \mathcal F.$$ 
By  \eqref{Dpc=1}, \eqref{detee} and the previous observation one deduces that
\begin{equation}\label{eg2}
e^{\zeta }=
\left[e^{\frac{1}{2}D^g_p}\right]^{{\lambda}}\det\left[\frac{\partial^{2} D^g_{p}}{\partial z_{a} \partial\bar z_{\beta}}\right]\in \F^{\mu_1}\cdots\F^{\mu_r} \F,
\end{equation}
with $\mu_j=\frac{1}{2}{\lambda\gamma_j}$, $j=1,\dots,r$.
On  the one hand  \eqref{eg2} shows that $e^{\zeta }$ and hence $\zeta$  is of diastasis-type  and so, 
$\zeta \in\F$. 
On the other  hand  \eqref{eg2} together  with 
Theorem \ref{lem}  force $\zeta$  to be a  constant and so $f_X$ is the real part of a holomorphic function.
Therefore, by \eqref{eqkrsg} and   \eqref{LXomega}  the metric   $g$ is KE.
\endproof

\begin{proof}[Proof of Theorem \ref{mainteor}]
Let  $\varphi:\left(M, g\right)\rightarrow \left(\W, g_{\W}\right)$ be a holomorphic isometry. Let $\{z_1, \dots , z_n\}$ be a coordinate system for $M$ centered at $p$, 
by the hereditary property of Calabi's diastasis function (see \cite{Cal} for details), we have that
\begin{equation}\label{eqdipr}
D^{g}_p(z)=D^{g_\W}_{\varphi (p)}(\vf(z)).
\end{equation}

Consider the realization of $\W$ as a homogeneous Siegel domain as in the previous section. 
From
the definition of Calabi's diastasis function \eqref{defdiastasis} and 
 Theorem \ref{thmhide}  we deduce  that the 
the diastasis function for the homogeneous  metric $g_\Omega$  at given point $v\in\Omega$ is 
given by:
\begin{equation}\label{diastomega}
D^{g_\Omega}_v(u)=\sum_{k=1}^r\gamma_k
\log\left(\frac{F_{k}(u,\ov u)\hs F_{k}(v,\ov v)}{F_{k}(u,\ov v)\hs F_{k}(v,\ov u)}\right),
\end{equation}
where $\gamma_k$ are positive real numbers and  $F_k(u, \ov v )$ are rational holomorphic  functions
in  $u$ and $\ov v$, $k=1, \dots ,r$.
Therefore
\begin{equation}\label{eqeelg}
D^{g_\W}_{\varphi (p)}(\varphi (z))=\sum_{k=1}^{r} \gamma_{k} \log \left(\frac{F_{k}(\varphi  (z),\ov {\varphi  (z)})\hs F_{k}(\varphi  (0),\ov {\varphi  (0)})}{F_{k}(\varphi  (z),\ov {\varphi  (0)})\hs F_{k}({\varphi  (0)},\ov {\varphi  (z)})}\right)
\end{equation}
From \eqref{eqdipr} and \eqref{eqeelg}, we get
$$ 
e^{D^{g}_p(z)}=\prod_{k=1}^{r}\left(\frac{F_{k}(\varphi (z),\ov {\varphi (z)})\hs F_{k}({\varphi (0)},\ov {\varphi (0))}}{F_{k}({\varphi (z)},\ov {\varphi (0)})\hs F_{k}({\varphi (0)},\ov {\varphi (z)})}\right)^{\gamma_k}.
$$
and hence
$$
e^{D^{g}_p(z)}\in  \mathcal F^{\gamma_1}\cdots \mathcal F^{\gamma_{r}}.
$$ 
On the other hand, from \eqref{eqcardi}, we can see that each term in \eqref{eqeelg} 
is of diastasis-type. That  is
$$
e^{D^{g}_p(z)} \in \F^{\gamma_1}\cdots \F^{\gamma_{r}},
$$
namely \eqref{Dpc=1}.
Thus, by Proposition \ref{mainprop} we deduce that $g$ is KE,
completing the proof of \emph{(i)}  of Theorem \ref{mainteor}.
 
\noindent\emph{Proof of (ii).} 
Let $(\W,g_\W)$ be a homogeneous bounded domain and let $U \subset \C$ be a \ngh\ of the origin and let $g_0$ the indefinite flat metric on $\C^n$ whose \K\ potential is given by $|z|_l^2:=|z_1|^2+\dots+|z_l|^2-|z_{l+1}|^2-\dots-|z_n|^2$, $l\in\left\{0,1,\dots,n\right\}$. Assume by contradiction that there exists a holomorphic immersion $\varphi:U \to  \W$ and $\eta:U\to \C^n$, $\eta (0)=0$, such that 
$$
\varphi^* g_\W  = \eta^* g_0. 
$$
By \eqref{eqdipr}  and \eqref{eqeelg} this is equivalent to the following equation
$$
\sum_{k=1}^{r} \gamma_{k} \log \left(\frac{F_{k}(\varphi (z),\ov {\varphi (z)})\hs F_{k}(\varphi (0),\ov{\varphi (0)})}{F_{k}(\varphi (z),\ov{\varphi (0)})\hs F_{k}(\varphi (0),\ov{\varphi (z)})}\right) = |\eta (z)|_l^2.
$$
So that
$$
e^{ |\eta(z)|_l^2}=\prod_{k=1}^{r} \left(\frac{F_{k}(\varphi (z),\ov{\varphi (z)}\hs F_{k}(\varphi (0),\ov{\varphi (0))}}{F_{k}(\varphi (z),\ov{\varphi (0)})\hs F_{k}(\varphi (0),\ov{\varphi (z)})}\right)^{\gamma_{k}} .
$$
By applying Proposition \ref{mainprop} and Theorem \ref{lem} we see that $\eta$ is forced to by constant, which contradicts the fact that  $\eta$ is an immersion (unless $M$ is zero-dimensional). 
\end{proof}

\end{document}